\newtheorem{theorem}{Theorem}[section]
\newtheorem{corollary}[theorem]{Corollary}
\newtheorem{proposition}[theorem]{Proposition}
\newcommand{\qed}{\hfill $\square$ \bigskip}
\begin{document}

\title{Global forcing number for maximal matchings in corona products}

\author{ Sandi Klav\v{z}ar$^{a,b,c}$ \and Mostafa Tavakoli$^{d}$\footnote{Corresponding author} \and Gholamreza Abrishami$^{d}$} 

\date{}

\maketitle
\vspace{-0.8 cm}
\begin{center}
	$^a$ Faculty of Mathematics and Physics, University of Ljubljana, Slovenia\\
	{\tt sandi.klavzar@fmf.uni-lj.si}\\
	\medskip

	$^b$ Faculty of Natural Sciences and Mathematics, University of Maribor, Slovenia\\
	\medskip

	$^c$ Institute of Mathematics, Physics and Mechanics, Ljubljana, Slovenia\\
	\medskip

$^d$ Department of Applied Mathematics, Faculty of Mathematical Sciences,\\
Ferdowsi University of Mashhad, P.O.\ Box 1159, Mashhad 91775, Iran\\
{\tt m$\_$tavakoli@um.ac.ir\\
 \tt gh.abrishamimoghadam@mail.um.ac.ir
\medskip
}

\end{center}

\begin{abstract}
A global forcing set for maximal matchings of a graph $G=(V(G), E(G))$ is a set $S \subseteq E(G)$ such that  $M_1\cap S \neq M_2 \cap S$ for each pair of maximal matchings $M_1$ and $M_2$ of $G$.  The smallest such set is called a minimum global forcing set, its size being the global forcing number for maximal matchings $\phi_{gm}(G)$ of $G$. In this paper, we establish lower and upper bounds on the forcing number for maximal matchings of the corona product of graphs. We also introduce an integer linear programming model for computing the forcing number for maximal matchings of graphs.
\end{abstract}

\noindent {\bf Key words:} maximal matching; perfect matching; global forcing number; corona product;  integer linear programming

\medskip\noindent
{\bf AMS Subj.\ Class:} 05C70; 90C10

\section{Introduction}
\label{sec:intro}

Matchings represent one of the most important concepts of graph theory with many applications~\cite{lovasz}. In particular, (perfect) matchings are extremely important in theoretical chemistry, where perfect matchings bear the alternative name Kekul\'{e} structures, cf.~\cite{cigher-2009, feng-2021, zhang-2018, zhao-2020}. 

The investigation of forcing sets in the theory of graph matchings also has a considerable history, largely born of the study of resonant structures in chemical graphs. The list of articles~\cite{adams-2004, diwan-2019, sedlar-2012, tavakoli} is just a small selection of research in this field. In this paper we are interested in global forcing sets for maximal matchings, the concept introduced in~\cite{global} as an extension of the idea of global forcing sets (and global forcing numbers) for perfect matchings. We proceed as follows. In the rest of the introduction we first introduce the required terminology and notation from the matching theory, and then present the corona product and present some related notation to be used later on. In the main section of the paper we prove several upper and lower bounds on the global forcing numbers for perfect matchings of corona products. As a side result we also derive a formula for the matching number of corona products. In the concluding section we introduce an integer linear programming model for computing the forcing number for maximal matchings of graphs. 

\subsection{Matching terminology}

Let $G=(V(G), E(G))$ be a graph. A \emph{matching} in $G$ is a set of pairwise non-adjacent edges. A \emph{maximal matching} is one which cannot be extended to a larger matching. Each vertex incident with an edge of a matching $M$ of $G$ is said to be {\em saturated} by $M$. A \emph{maximum matching} is a maximal matching which saturates as many vertices as possible.  The size of a maximum matching in $G$ is called the {\em matching number} of $G$ and denoted by $\nu(G)$.  A \emph{perfect matching} of $G$ is a maximum matching which saturates every vertex of $G$. The smallest size of a maximal matching in $G$ is called the \emph{saturation number} of $G$ and denoted by $s(G)$. See~\cite{Yannakakis} for an application of smallest maximal matchings related to a telephone switching network and~\cite{Biedl, lovasz, tratnik, zito} for relations between the saturation number and the matching number.

A \emph{global forcing set for maximal matchings} of $G$ is a set $S \subseteq E(G)$ such that  $M_1\cap S\neq M_2 \cap S$ for any two maximal matchings $M_1$ and $M_2$ of $G$. As we are dealing only with matchings, in the rest of the paper we will shorten this long naming by saying that $S$ is a {global forcing set} of $G$. A smallest global forcing is called a \emph{minimum global forcing set} and the size of it, denoted by $\phi_{gm}(G)$, is called the \emph{global forcing number}~\cite{tavakoli, global}. 

\subsection{Corona products}

If $G$ is a graph, then its order will be denoted by $n(G)$. We will also use the convention that if $n$ is a positive integer, then $[n] = \{1,\ldots, n\}$. 

Let $G$ and $H$ be graphs with $V(G) = \{g_1, \ldots,g_{n(G)}\}$ and $V(H) = \{h_1,\ldots, h_{n(H)}\}$.  The \emph{corona product} of $G$ and $H$, denoted by $G\circ H$, is a graph obtained from the disjoint union of a copy of $G$ and $n(G)$ copies of $H$, denoted by $H_i$, $i\in [n(G)]$. The product $G\circ H$ is then obtained by making adjacent $g_i$ to every vertex in $H_i$ for each $i\in [n(G)]$, cf.~\cite{Klavzar-corona, kristiana-2020, yoong-2021}. 

For $i\in [n(G)]$, let $V(H_i)=\{h_1^i, \ldots, h_{n(H)}^i\}$. Then the corona product $G\circ H$ can be formally defined as follows. It vertex set is 
$$V(G \circ H) = V(G) \cup \Bigg [ \bigcup\limits_{i=1}^{n(G)} V(H_i) \Bigg ]\,,$$
and its edge set is $E(G \circ H) = E_G \cup E_H \cup E_{H,G}$, where
\begin{align*}
E_G&=E(G),\\\\
E_H&= \Bigg [ \bigcup\limits_{i=1}^{n(G)} E(H_i) \Bigg ], \\\\
E_{H,G}&= \Bigg [ \bigcup\limits_{i=1}^{n(G)}\  \bigg \{g_ih_j^i:\ j\in [n(H)] \bigg\} \Bigg ].
\end{align*}
We use the notations $E_G$, $E_H$, and $E_{H,G}$ for the edges of the corona product of two given graphs $G$ and $H$  throughout the paper. 

\section{Bounds on the global forcing number of corona products }
\label{sec:main}

In this section we give upper and lower bounds on the global forcing number of corona products. For this sake we first determine the matching number of corona products, a result of independent interest. 

\begin{proposition}
\label{prop:nu}
Let $G$ and $H$ be two graphs. If $H$ has a perfect matching, then 
\[\nu(G\circ H)= \nu(G) + n(G)\nu(H) \,,\]
otherwise,
\[\nu(G \circ H)= n(G) + n(G)\nu(H)\,.\]
\end{proposition}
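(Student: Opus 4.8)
The plan is to prove both equalities by establishing matching lower and upper bounds, treating the two cases in parallel. For the lower bounds I would exhibit explicit matchings. If $H$ has a perfect matching, I take a maximum matching of $G$ (of size $\nu(G)$) together with a perfect matching of each copy $H_i$ (of size $\nu(H)$ each); since edges inside $G$ and edges inside the copies share no vertices, their union is a matching of size $\nu(G) + n(G)\nu(H)$. If $H$ has no perfect matching, then a maximum matching of $H_i$ leaves at least one vertex $h_j^i$ unsaturated, so I instead take, in each copy, a maximum matching of $H_i$ (size $\nu(H)$) plus the connector edge $g_i h_j^i$; these $n(G)$ connector edges and the $n(G)\nu(H)$ internal edges are pairwise disjoint, yielding a matching of size $n(G) + n(G)\nu(H)$.

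For the upper bounds I would start from an arbitrary matching $M$ of $G\circ H$ and classify its edges according to the partition $E(G\circ H) = E_G \cup E_H \cup E_{H,G}$: let $a = |M \cap E_G|$, let $b = |M \cap E_{H,G}|$, and for each $i \in [n(G)]$ let $m_i$ be the number of edges of $M$ lying inside the copy $H_i$, so that $|M| = a + b + \sum_{i} m_i$. Two constraints drive the argument. First, each vertex $g_i$ is incident with at most one edge of $M$, and a vertex used by an $E_G$-edge cannot be used by a connector edge; counting the $g_i$'s shows $2a + b \le n(G)$, whence $a + b \le n(G)$. Second, the edges of $M$ inside $H_i$ form a matching of a copy of $H$, so $m_i \le \nu(H)$ for every $i$.

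In the non-perfect-matching case these two facts already suffice: $|M| = (a+b) + \sum_i m_i \le n(G) + n(G)\nu(H)$, matching the lower bound. The subtle point, and the step I expect to be the main obstacle, is the perfect-matching case, where the crude bound $m_i \le \nu(H)$ is too weak. Here I would use a parity refinement: if $g_i$ carries a connector edge $g_i h_j^i$, then the $2m_i$ vertices saturated inside $H_i$ together with $h_j^i$ are distinct, so $2m_i + 1 \le n(H) = 2\nu(H)$, forcing $m_i \le \nu(H) - 1$. Summing over the $b$ indices carrying a connector edge and the remaining $n(G) - b$ indices gives $\sum_i m_i \le n(G)\nu(H) - b$, so that $|M| = a + b + \sum_i m_i \le a + n(G)\nu(H) \le \nu(G) + n(G)\nu(H)$, using $a \le \nu(G)$. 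This matches the lower bound and completes the proof.
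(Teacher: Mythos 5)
Your proof is correct. The lower-bound constructions and the upper bound in the case where $H$ has no perfect matching essentially coincide with the paper's argument (the paper counts saturated vertices, you count edges via $a+b\le n(G)$ and $m_i\le\nu(H)$; these are the same computation). Where you genuinely diverge is the upper bound when $H$ has a perfect matching: the paper uses an exchange argument, repeatedly replacing a connector edge $g_ih_j^i$ together with $M\cap E(H_i)$ by a perfect matching of $H_i$ (noting $|M\cap E(H_i)|<\nu(H)$ when $h_j^i$ is already saturated) until it reaches a maximum matching disjoint from $E_{H,G}$, which then splits cleanly into a matching of $G$ and matchings of the copies. You instead stay with an arbitrary matching and run a refined count: the parity observation $2m_i+1\le n(H)=2\nu(H)$, hence $m_i\le\nu(H)-1$ for each of the $b$ copies touched by a connector edge, makes the loss $-b$ in $\sum_i m_i$ exactly cancel the gain $+b$ from the connector edges, leaving $|M|\le a+n(G)\nu(H)\le\nu(G)+n(G)\nu(H)$. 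Your version is more uniform (both cases fall out of the same bookkeeping) and avoids the iterative ``repeat as necessary'' step; the paper's version yields the slightly stronger structural fact that some maximum matching of $G\circ H$ avoids $E_{H,G}$ entirely. One small point worth making explicit in your write-up: distinct connector edges of $M$ meet distinct copies $H_i$ (all connectors into $H_i$ emanate from the single vertex $g_i$), which is what justifies summing $m_i\le\nu(H)-1$ over exactly $b$ indices.
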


\begin{proof}
First suppose that $H$ has a perfect matching. Let $M$ be a matching of $G \circ H$ with $|M|=\nu(G \circ H)$. Suppose that $M \cap E_{G,H} \ne \emptyset$ and let $ e=g_ih_j^i \in (M \cap E_{G,H})$. Since $H$ has a perfect matching, $|M \cap E(H_i)| < \nu(H)$. Let  $M^*$ be a perfect matching of $H_i$ and set 
\[ M' = \bigg [ M \setminus \big [\{e\}\cup \big (M \cap E(H_i) \big) \big] \bigg] \cup M^* \,. \]
Then $M'$ is a matching with $|M'| \ge |M|$, and therefore, since $M$ is maximum, $|M'|  = |M| = \nu(G\circ H)$. Repeating this process as many times as necessary, we arrive at a maximum matching $M''$ of $G\circ H$ with $M'' \cap E_{G,H}=\emptyset$. But now it is clear that $|M'' \cap E(H_i)|  = \nu(H)$ for each $i\in [n(G)]$ and that  $|M'' \cap E_G|  = \nu(G)$. We conclude that  $\mu(G\circ H) = |M''| = \nu(G) + n(G)\nu(H)$. 

Suppose second that $H$ does not admit a perfect matching. A matching of $G\circ H$ in each $H_j$, $j\in [n(G)]$, saturates as most $2\nu(H) + 1$ vertices, while a matching of $G\circ H$ clearly saturates at most $n(G)$ vertices from $G$. It is straightforward to construct a matching that saturates that many vertices: in each $H_j$ take a maximum matching of $H$, and then for each $j\in [n(G)]$ add an edge between an unsaturated vertex of $H_j$ and $g_j$. Such a matching contains $n(G) + n(G)\nu(H)$ edges and we are done. 
\qed
\end{proof}

With Proposition~\ref{prop:nu} in hand, we can bound the global forcing number of corona products from the above as follows. 

\begin{theorem}
\label{thm:upper}
Let $G$ and $H$ be two graphs. If $H$ has a perfect matching, then 
\[\phi_{gm}(G\circ H) \le|E(G \circ H)| - \nu(G) - n(G)\nu(H)\,,\]
otherwise,
\[\phi_{gm}(G\circ H) \le |E(G \circ H)|- n(G) - n(G)\nu(H)\,.\]
\end{theorem}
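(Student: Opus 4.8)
The plan is to establish the two upper bounds by exhibiting, in each case, a global forcing set whose size equals the claimed right-hand side. The natural strategy is to start with the \emph{complement} of a maximum matching. Recall that a general way to produce a global forcing set is the following: if $M_0$ is any fixed maximal matching, then $E(G\circ H)\setminus M_0$ is a global forcing set. Indeed, for two distinct maximal matchings $M_1, M_2$, if they agreed on $E(G\circ H)\setminus M_0$, then their symmetric difference would be contained in $M_0$; but two distinct \emph{maximal} matchings cannot differ only inside a single matching $M_0$, since $M_0$ is itself an independent edge set and any two matchings contained in $M_0$ that are both maximal in $G\circ H$ must coincide. So first I would verify this generic fact that the complement of a maximal matching is always a global forcing set, giving the crude bound $\phi_{gm}(G\circ H)\le |E(G\circ H)|-|M_0|$.

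Then I would simply choose $M_0$ to be a \emph{maximum} matching, so that $|M_0|=\nu(G\circ H)$, and substitute the value of $\nu(G\circ H)$ computed in Proposition~\ref{prop:nu}. In the case where $H$ has a perfect matching this yields $|E(G\circ H)|-\nu(G)-n(G)\nu(H)$, and in the case where $H$ has no perfect matching it yields $|E(G\circ H)|-n(G)-n(G)\nu(H)$, matching the two displayed bounds exactly. Since every maximum matching is in particular a maximal matching, the generic fact applies and the two estimates follow immediately from the proposition.

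The step I expect to require the most care is the justification that the complement of a maximal matching is genuinely a global forcing set, i.e. the claim that two distinct maximal matchings of $G\circ H$ cannot coincide outside $M_0$. The clean argument is: suppose $M_1\cap(E\setminus M_0)=M_2\cap(E\setminus M_0)$, where $E=E(G\circ H)$; then $M_1$ and $M_2$ share all edges not in $M_0$, so $M_1\triangle M_2\subseteq M_0$. But $M_0$ is a matching, hence an \emph{independent} set of edges, so no two of its edges share a vertex. A vertex saturated by an edge of $M_1\triangle M_2$ would then have to be unsaturated in the other matching while being saturatable by an edge (an edge of $M_0$) disjoint from all other differing edges, contradicting the maximality of the matching in which it is unsaturated. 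Pinning down this contradiction cleanly — that maximality forbids any nonempty symmetric difference sitting inside an independent edge set — is the real content; the arithmetic of plugging in $\nu(G\circ H)$ is then routine.
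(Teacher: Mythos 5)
Your proposal is correct and follows essentially the same route as the paper: take the complement of a maximum matching, invoke the fact that the complement of a matching is a global forcing set to get $\phi_{gm}(X)\le |E(X)|-\nu(X)$, and substitute the value of $\nu(G\circ H)$ from Proposition~\ref{prop:nu}. The only difference is that the paper cites this generic fact from Vuki\v{c}evi\'c et al.\ (\cite[Corollary 5, Theorem 7]{global}) whereas you prove it directly, and your symmetric-difference argument for it is sound.
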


\begin{proof}
In~\cite[Corollary 5]{global} Vuki\v{c}evi\'c et al.\ observed that the complement of a matching is a global forcing set and consequently, $\phi_{gm}(X) \le m(X) - \nu(X)$ holds for a graph $X$, cf.~\cite[Theorem 7]{global}. Combining this bound with Proposition~\ref{prop:nu}, the result follows. 
\qed
\end{proof}

As a small example consider the graph $Y$ from Fig.~\ref{fig:graphY}. Consider the subgraph $K_2$ of $Y$ induced by the edge $e$ to see that $Y = K_2\circ K_2$. If $e$ belongs to a maximal matching of $Y$, then this matching is unique. On the other hand, there are exactly $9$ maximal matchings of $Y$ that do not contain $e$. Denoting by $\Psi(G)$ the total number of maximal matchings of a graph $G$, we thus have $\Psi(Y) = 10$. From~\cite[Proposition 2]{global} we know that $\phi_{gm}(G) \ge \lceil \log_2 \Psi(G)\rceil$ holds for a graph $G$, hence  $\phi_{gm}(Y) \ge 4$. On the other hand, the first inequality of Theorem~\ref{thm:upper} gives  $\phi_{gm}(Y) \le 4$.  

\begin{figure}[ht!]
\begin{center}
\begin{tikzpicture}[scale=0.7,style=thick,x=1cm,y=1cm]
\def\vr{3pt}

\coordinate(x1) at (0,-1);
\coordinate(x2) at (0,1);
\coordinate(x3) at (2,0);
\coordinate(x4) at (5,0);
\coordinate(x5) at (7,-1);
\coordinate(x6) at (7,1);
\draw (x1) -- (x2) -- (x3) -- (x4) -- (x5) -- (x6) -- (x4);
\draw (x1) -- (x3);
\foreach \i in {1,2,3,4,5,6}
{
\draw(x\i)[fill=white] circle(\vr);
}
\draw(x3)++(1.5,-0.1) node[below] {$e$};
\end{tikzpicture}
\end{center}
\caption{Graph $Y$}
\label{fig:graphY}
\end{figure}

With respect to the above example we mention that a recurrence relation for the sequence $p_n = \Psi(P_n\circ P_1)$ was derived in~\cite[Proposition 7.3]{doslic-2016}, and that in~\cite[Proposition 7.4]{doslic-2016} it was proved that $\Psi(P_n\circ K_3) = 3^{n+1}F_{n+2}$, where $F_n$ is the $n$th Fibonacci number.  

For the second inequality of Theorem~\ref{thm:upper} consider the corona product $K_2\circ K_1$ which is isomorphic to $P_4$. Clearly $\phi_{gm}(P_4) = 1$ which is also the value given by the second inequality. It would be interesting to find more sharpness examples for both bounds of Theorem~\ref{thm:upper}.  

We can also bound the global forcing number of $G\circ H$ in terms of $\phi_{gm}(G)$ and $\phi_{gm}(H)$. 

\begin{theorem}
If $G$ and $H$ are graphs, then 
\[\phi_{gm}(G\circ H) \le \phi_{gm}(G) + n(G)\phi_{gm}(H) + n(G)n(H).\]
\end{theorem}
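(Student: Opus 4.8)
The plan is to exhibit an explicit global forcing set $S$ of $G\circ H$ whose cardinality meets the stated bound, and then verify the forcing property by a copy-by-copy analysis. The natural candidate is
\[
S = S_G \cup \Big(\bigcup_{i=1}^{n(G)} S_H^i\Big) \cup E_{H,G},
\]
where $S_G\subseteq E_G$ is a minimum global forcing set of $G$, each $S_H^i\subseteq E(H_i)$ is a minimum global forcing set of the copy $H_i\cong H$, and $E_{H,G}$ is the full set of bridge edges. Since these three blocks lie in pairwise disjoint edge classes, $|S| = \phi_{gm}(G) + n(G)\phi_{gm}(H) + n(G)n(H)$, exactly the target. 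It then remains to show that $S$ is a global forcing set, i.e.\ that $M_1\cap S = M_2\cap S$ forces $M_1 = M_2$ for maximal matchings $M_1,M_2$.

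First I would exploit the bridges. Because $E_{H,G}\subseteq S$, the two matchings agree on every bridge edge, so for each $i$ they treat $g_i$ identically relative to its copy: either both leave $g_i$ unmatched into $H_i$, or both match $g_i$ to the same vertex $h_j^i$. This splits the copies into two types and lets me argue locally.

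The clean case is when $g_i$ is not matched into $H_i$. A short maximality argument shows that then $M_k\cap E(H_i)$ is a \emph{maximal} matching of $H_i\cong H$: any edge inside $H_i$ that could be added would also be addable in $G\circ H$, since $g_i$ is irrelevant inside $H_i$. As $M_1,M_2$ agree on the global forcing set $S_H^i$, they must agree on $M_k\cap E(H_i)$. The analogous claim for $G$ is that $M_k\cap E_G$ is a maximal matching of the graph obtained from $G$ by deleting the bridge-matched vertices, and I would recover it from agreement on $S_G$.

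The hard part, and the main obstacle, is the case where $g_i$ is matched to some $h_j^i$. Here $M_k\cap E(H_i)$ is only guaranteed to be a maximal matching of $H_i - h_j^i$, which need not be a maximal matching of $H_i$, and a global forcing set of $H$ is not a priori obliged to separate maximal matchings of a vertex-deleted subgraph. Resolving this is the crux: one must either show that, once the bridge at $g_i$ is fixed, the restriction is still pinned down by $S_H^i$, or strengthen the per-copy block so that the vertex-deletion cases are separated as well while keeping the count $\phi_{gm}(H)+n(H)$ per copy (the bridges contribute exactly $n(H)$ edges here). The identical deletion subtlety reappears for the $G$-block, and I expect both to hinge on combining maximality across the bridge with the forcing property. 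Once these local agreements are secured for every copy and for the $G$-part, assembling them yields $M_1 = M_2$ and completes the proof.
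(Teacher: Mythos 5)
Your candidate set $S$ and the count $|S|=\phi_{gm}(G)+n(G)\phi_{gm}(H)+n(G)n(H)$ are exactly those of the paper, and your treatment of the case where $g_i$ is not bridge-matched into $H_i$ agrees with the paper's argument. However, the case you yourself single out as the crux --- $g_ih_j^i\in M_1\cap M_2$, where $M_k\cap E(H_i)$ is only guaranteed to be a maximal matching of $H_i-h_j^i$ and hence need not be separated by a global forcing set of $H_i$ --- is left unresolved in your write-up, so the proposal is not a complete proof. You should know, though, that you have put your finger on a genuine defect: the paper's proof simply asserts that each $M_k\cap E(H_i)$ is a maximal matching of $H_i$ and invokes the forcing property of $S_{H_i}$, and that assertion is false in precisely the bridge-matched case (the same issue infects the claim $M_1\cap E(G)=M_2\cap E(G)$ when some $g_i$ are saturated by bridge edges).

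The difficulty is substantive, not formal. Take $G=K_1$ with vertex $g$ and $H=P_4$ with vertices $a,b,c,d$ in path order. Then $S_H=\{ab\}$ is a legitimate minimum global forcing set of $H$, since it separates the two maximal matchings $\{ab,cd\}$ and $\{bc\}$ of $P_4$. Yet in $G\circ H$ the maximal matchings $M_1=\{ga,bc\}$ and $M_2=\{ga,cd\}$ satisfy $M_1\cap S=\{ga\}=M_2\cap S$ for $S=\{ab\}\cup E_{H,G}$, so this $S$ is not a global forcing set even though it has the prescribed size. (Choosing $S_H=\{bc\}$ instead does yield a global forcing set here.) Thus the construction works only for suitably chosen minimum global forcing sets of the factors, and neither your proposal nor the paper shows that such a choice always exists. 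Closing your gap therefore requires an additional idea --- for instance, proving that some minimum global forcing set of $H$ also separates the maximal matchings of every vertex-deleted subgraph $H-h$ that arise this way, or enlarging the per-copy blocks at the cost of weakening the stated bound.
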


\begin{proof}
Let $S_G$ be a minimum global forcing set of $G$ and let $S_{H_i}$, $i\in [n(G)]$, be a minimum global forcing set of $H_i$.  Set
\[S=S_G\cup \left(\bigcup^{n}_{i=1}S_{H_i} \right) \cup E_{G,H}.\]
Clearly, $|S|=\phi_{gm}(G) + n\phi_{gm}(H)+n(G)n(H)$. We claim that $S$ is a global forcing set in $G \circ H$. For this sake assume that there are maximal matchings $M_1$ and $M_2$ of $G \circ H$  such that $M_1\neq M_2$ and $M_1\cap S=M_2\cap S$. Since each of $M_1\cap E(H_i)$ and $M_2 \cap E(H_i)$ is a maximal matchings of $H_i$, $i\in [n(G)]$, and as $S\cap E(H_i) = S_{H_i}$ is a minimum global forcing of $E_i$, we have $M_1\cap E(H_i) = M_2 \cap E(E_i)$  for each $i\in [n(G)]$. Similarly we get that $M_1\cap E(G) = M_2 \cap E(G)$. Since $M_1\neq M_2$, we have $M_1\cap E_{G,H} \ne M_2\cap E_{G,H}$. But $E_{G,H} \subseteq S$ we then get that $M_1\cap S \ne M_2\cap S$. 
\qed
\end{proof}

A graph $G$ is \emph{randomly matchable} if every matching extends to a perfect matching, equivalently, if every maximal matching is a perfect matching. For corona products in which the second factor is randomly matchable, we have the following lower bound. 

\begin{theorem}
\label{thm:randomly}
If $G$ is a graph and $H$ a randomly matchable graph, then 
\[\phi_{gm}(G\circ H) \ge \phi_{gm}(G) + n(G)\phi_{gm}(H)+\frac{n(G)n(H)}{2}\,.\]
\end{theorem}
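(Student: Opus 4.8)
The plan is to fix an arbitrary global forcing set $S$ of $G\circ H$ and bound $|S|$ from below by splitting it along the three edge classes, writing $|S|=|S\cap E_G|+|S\cap E_H|+|S\cap E_{H,G}|$, and then showing that each of these three disjoint parts is at least the corresponding summand in the claimed bound. Throughout I will use that, since $H$ is randomly matchable, $H$ (and each $H_i$) has a perfect matching, $n(H)$ is even, and the maximal matchings of $H_i$ are precisely its perfect matchings.

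First I would treat $S\cap E_G$ by lifting maximal matchings of $G$ into $G\circ H$ cheaply. Fixing once and for all a perfect matching $P_{i'}$ of each $H_{i'}$, any maximal matching $M^G$ of $G$ yields the matching $M^G\cup\bigcup_{i'}P_{i'}$ of $G\circ H$, and I would check that it is maximal: every vertex of each $H_{i'}$ is saturated by $P_{i'}$, so no cross edge and no edge inside a copy can be added, while $M^G$ blocks all $G$-edges. Two such lifts use no edge of $E_{H,G}$ and the same edges of $E_H$, hence they agree on $S$ exactly when $M_1^G,M_2^G$ agree on $S\cap E_G$. Since $S$ forces, $S\cap E_G$ must separate every pair of maximal matchings of $G$, so it is a global forcing set of $G$ and $|S\cap E_G|\ge\phi_{gm}(G)$.

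The same lifting argument, now varying the matching only inside a single copy $H_i$ while freezing a perfect matching on every other copy and a fixed maximal matching on $G$, shows that $S\cap E(H_i)$ separates every pair of perfect matchings of $H_i$. Because $H_i$ is randomly matchable its maximal matchings are exactly its perfect matchings, so $S\cap E(H_i)$ is a global forcing set of $H_i\cong H$ and $|S\cap E(H_i)|\ge\phi_{gm}(H)$; summing over $i\in[n(G)]$ gives $|S\cap E_H|\ge n(G)\phi_{gm}(H)$.

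The crux will be the bound $|S\cap E_{H,G}|\ge\tfrac{n(G)n(H)}{2}$, and this is where the randomly matchable hypothesis is used decisively. Fix a copy $i$ and a perfect matching $P=\{p_1,\dots,p_{n(H)/2}\}$ of $H_i$. For an edge $p_k=h_a^i h_b^i\in P$, I would compare two maximal matchings of $G\circ H$ that are identical except on copy $i$, where one contains the cross edge $g_i h_a^i$ together with $P\setminus\{p_k\}$ (leaving $h_b^i$ unsaturated) and the other contains $g_i h_b^i$ together with $P\setminus\{p_k\}$ (leaving $h_a^i$ unsaturated), both using a maximal matching of $G-g_i$ and fixed perfect matchings on the remaining copies elsewhere. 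Their symmetric difference is exactly $\{g_i h_a^i,\ g_i h_b^i\}$, so $S$ must contain one of these two cross edges; as $p_k$ ranges over the $n(H)/2$ edges of $P$ these pairs are disjoint, forcing $n(H)/2$ cross edges at $g_i$ into $S$, and summing over the $n(G)$ copies yields the bound. Adding the three inequalities finishes the proof. The main obstacle to watch is verifying maximality of the constructed matchings when $g_i$ is saturated through a cross edge rather than inside $G$: this is exactly why one matches $G-g_i$ maximally and relies on $H$'s perfect matchings to saturate every remaining copy, so that each unsaturated vertex (the single free vertex of $H_i$, or any unsaturated $g_j$) has all its neighbours, including $g_i$, already saturated.
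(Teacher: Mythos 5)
Your proposal is correct and follows essentially the same route as the paper: the same three-way split $|S|=|S\cap E_G|+|S\cap E_H|+|S\cap E_{H,G}|$, the same lifting of matchings of $G$ (resp.\ of a single copy $H_i$) by padding with perfect matchings elsewhere, and the same cross-edge argument pairing $g_ih_a^i$ with $g_ih_b^i$ along a perfect matching of $H_i$ (you state it directly where the paper phrases it as a pigeonhole contradiction, but it is the same argument). Your explicit verification of maximality via $M_{G-g_i}$ matches the paper's construction as well.
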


\begin{proof}
In what follows, $M_{H_i}$ will denote a maximal matching of $H_i$, $i\in [n(G)]$, and $M_G$ a maximal matching of $G$. As $H$ is randomly matchable, each $M_{H_i}$ is a perfect matching of $H_i$. 

Let $S$ be a minimum global forcing set of $G\circ H$. Thus, $|S|=\phi_{gm}(G\circ H)$. We claim that for each $i\in [n(G)]$, we have
\[|S \cap E(H_i)| \ge \phi_{gm}(H). \]
Suppose to the contrary that there exists an integer $j\in [n(G)]$ such that $|S \cap E(H_j)| < \phi_{gm}(H)$. Then there exist two maximal (equivalently, perfect) matchings $M_1^+$ and $M_2^+$ of $H_j$ such that
$[M_1^+ \cap \big(S \cap E(H_j)\big) = M_2^+ \cap \big(S \cap E(H_j)\big)$. Set
$$
M_1= \left(\bigcup^{n}_{\substack{t=1\\ t\ne j}} M_{H_t} \right)\cup M_G \cup M_1^+ 
\quad {\rm and} \quad 
M_2 = \left(\bigcup^{n}_{\substack{t=1\\ t\ne j}} M_{H_t} \right)\cup M_G \cup M_2^+.
$$
Since $H$ is a randomly matchable graph, each of $M_1^+$, $M_2^+$, and $M_{H_t}$, $t\ne j$, is a perfect matching (of $H$), hence no edge from $E_{H,G}$ can be added to $M_1$ or to $M_2$ to extend the matchings. Hence $M_1$ and $M_2$ are different maximal matchings of $G\circ H$. Now we have
\begin{align*}
M_1\cap S&= \left(\left(\bigcup^{n}_{\substack{t=1\\ t\ne j}} M_{H_t} \right)\cap S\right)\cup\left(M_G\cap S\right) \cup 						\left(M_1^+\cap S\right)\\
               &=\left(\left(\bigcup^{n}_{\substack{t=1\\ t\ne j}} M_{H_t} \right)\cap S\right)\cup\left(M_G\cap S\right) \cup 						\left(M_1^+\cap (S\cap E(H_j)\right)\\
               &=\left(\left(\bigcup^{n}_{\substack{t=1\\ t\ne j}} M_{H_t} \right)\cap S\right)\cup\left(M_G\cap S\right) \cup 						\left(M_2^+\cap (S\cap E(H_j)\right)\\
               &=\left(\left(\bigcup^{n}_{\substack{t=1\\ t\ne j}} M_{H_t} \right)\cap S\right)\cup\left(M_G\cap S\right) \cup 						\left(M_2^+\cap S\right)\\
               &=M_2 \cap S,
\end{align*}
a contradiction with the assumption that $S$ is a global forcing set of $G\circ H$. 

We have thus seen that $|S \cap E(H_i)| \ge \phi_{gm}(H)$ for each $i\in [n(G)]$. By a similar argument we also get that $S \cap E(G)| \ge \phi_{gm}(G)$. To prove the claimed bound it thus remains to prove that $|S\cap E_{G,H}|\geq n(G)n(H)/2$.

Assume, to the contrary, that $|S\cap E_{G,H}| < n(G)n(H)/2$. Then, by the Pigeonhole principle, there exist an index $i\in [n(G)]$ and two edges $g_ih^i_l$ and $g_ih^i_t$ such that $g_ih^i_l, g_ih^i_t\notin S$ and $h^i_lh^i_t\in M_{H_i}$. Let  $M_{G-g_i}$ be a maximal matching of $G-g_i$ and set 
\begin{align*}
M'&=\left(\bigcup^{n}_{\substack{j=1\\ j\ne i}} M_{H_j} \right)\cup M_{G-g_i} \cup (M_{H_i}-\{h^i_lh^i_t\})\cup\{g_ih^i_l\}\,,\\
M''&=\left(\bigcup^{n}_{\substack{j=1\\ j\ne l}} M^t \right)\cup M_{G-g_i} \cup (M_{H_i}-\{h^i_lh^i_t\})\cup\{g_ih^i_t\}\,.
\end{align*}
Then each of $M'$ and $M''$ is a maximal matching of $G\circ H$. Since $M'\cap S= M''\cap S$ we have a contradiction proving that $|S\cap E_{G,H}|\geq n(G)n(H)/2$. We conclude that 
\begin{align*}
\phi_{gm}(G\circ H)=|S|&=|S\cap E_H|+|S\cap E_G|+|S\cap E_{G,H}|\\
	 &\ge |V(G)|\phi_{gm}(H)+\phi_{gm}(G)+|S\cap E_{G,H}|\\
	 &\ge n(G)\phi_{gm}(H) + \phi_{gm}(G) + \frac{n(G)n(H)}{2}.
\end{align*}
\qed
\end{proof}

Sumner~\cite{sumner-1979} proved that even complete graphs $K_{2n}$ and balanced complete bipartite graphs $K_{n,n}$ are the only randomly matchable graphs. Since Vuki\v{c}evi\'c et al.\  proved that $\phi_{gm}(K_{2k}) = \frac{(2k-2)^2}{2}$ (\cite[Theorem 17]{global}) and that $\phi_{gm}(K_{k,k}) = (k-1)^2$ (\cite[Theorem 20]{global}), we have the following: 

\begin{corollary}
If $G$ is a graph and $k\ge 2$, then 
\begin{align*}
\phi_{gm}(G\circ K_k) & \ge \phi_{gm}(G) + n(G)(2k^2 - 3k + 2)\,,\\ 
\phi_{gm}(G\circ K_{k,k} & \ge \phi_{gm}(G) + n(G)(k^2 - k + 1)\,.
\end{align*}
\end{corollary}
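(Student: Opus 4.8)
The plan is to obtain both inequalities as immediate specializations of Theorem~\ref{thm:randomly}. By Sumner's classification quoted above, every even complete graph $K_{2k}$ and every balanced complete bipartite graph $K_{k,k}$ is randomly matchable, so the hypothesis of Theorem~\ref{thm:randomly} is satisfied in both cases. Consequently the only work is to substitute the externally known values of $\phi_{gm}$ for these two families together with their orders, and then simplify the $n(G)$-coefficients.

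First I would treat the complete-graph bound by applying Theorem~\ref{thm:randomly} with second factor $K_{2k}$. Here $n(K_{2k})=2k$ and, from the cited value $\phi_{gm}(K_{2k})=\frac{(2k-2)^2}{2}$, we get $\phi_{gm}(K_{2k})=2k^2-4k+2$. Plugging these into the bound yields
\[
\phi_{gm}(G\circ K_{2k}) \ge \phi_{gm}(G) + n(G)\bigl(2k^2-4k+2\bigr) + \frac{n(G)\cdot 2k}{2},
\]
and collecting the $n(G)$-terms, the extra $n(G)k$ coming from the last summand turns $2k^2-4k+2$ into $2k^2-3k+2$, which is the claimed first inequality.

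Second I would treat the bipartite bound by applying Theorem~\ref{thm:randomly} with second factor $K_{k,k}$. Again $n(K_{k,k})=2k$, while the cited value gives $\phi_{gm}(K_{k,k})=(k-1)^2=k^2-2k+1$. The same substitution produces
\[
\phi_{gm}(G\circ K_{k,k}) \ge \phi_{gm}(G) + n(G)\bigl(k^2-2k+1\bigr) + \frac{n(G)\cdot 2k}{2},
\]
and the additional $n(G)k$ converts $k^2-2k+1$ into $k^2-k+1$, giving the second inequality.

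There is essentially no analytic obstacle here: all the content is packed into Theorem~\ref{thm:randomly} and into the two quoted evaluations of $\phi_{gm}$, so the corollary reduces to a pure substitution-and-arithmetic step. The only point that genuinely needs care is notational consistency: the arithmetic in the first line matches the second factor $K_{2k}$ (an even complete graph of order $2k$), which is precisely the family Sumner's theorem certifies as randomly matchable, so that line must be read with the complete second factor having $2k$ vertices rather than $k$.
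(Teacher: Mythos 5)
Your proposal is correct and matches the paper's (implicit) argument exactly: the corollary is obtained by substituting Sumner's classification and the quoted values $\phi_{gm}(K_{2k})=\frac{(2k-2)^2}{2}$ and $\phi_{gm}(K_{k,k})=(k-1)^2$, together with $n(H)=2k$, into Theorem~\ref{thm:randomly}. You are also right that the first displayed inequality must be read with second factor $K_{2k}$ (order $2k$) rather than $K_k$; the arithmetic only works out that way, and this appears to be a typo in the statement.
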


\section{Integer programming}
\label{sec:integer}

In this short concluding section we present an integer linear programming model for finding a minimum global forcing set and the global forcing number. Its computational applicability is limited because the number of constraints is huge, nevertheless the model could be of theoretical interest. (In~\cite{Klavzar} a similar model for finding the edge metric dimension is presented.) 

Let $G$ be a graph with $E(G) = \{e_1,\ldots,e_{m(G)}\}$ and let $\{M_1, \ldots, M_t\}$ be the set of all maximal matchings of $G$. Let  $D_G=[d_{ij}]$ be the maximal matchings versus edges incidence matrix, that is, $D_G$ is a $t\times m$ matrix, where $d_{ij}= 1$ if $e_j\in M_i$, and $d_{ij}= 0$ otherwise. Let $F: \{0,1\}^{m(G)} \rightarrow {\mathbb N}_0$ be defined by 
\[F(x_1,\ldots, x_{m(G)}) = x_1 + \cdots + x_{m(G)}\,.\]
Then our goal is to determine $\min F$ subject to the constraints
\[|d_{i1}-d_{j1}|x_1+|d_{i2}-d_{j2}|x_2 + \cdots + |d_{im}-d_{jm}|x_m>0,\ 1 \le i < j \le t\,.\]
Note that if $x'_1, \ldots , x'_m$ is a set of values for which $F$ attains its minimum, then $W = \{e_i:\ x'_i =1\}$ is a minimum global forcing set of $G$. 

For example, consider the complete graph $K_3$ with the edge set $\{e_1,e_2,e_3\}$. Then $\{\{e_1\}, \{e_2\}, \{e_3\}\}$ is the set of all maximal matchings of $K_3$. The incidence matrix $D_{K_3}$ is then the identity matrix $I_3$. Thus, $\min F(x_1,x_2,x_3) = x_1+x_2+x_3$ subject to the  constraints $x_1+x_2 > 0$, $x_1 + x_3 > 0$, $x_2+x_3 > 0$, $x_1, x_2, x_3 \in \{0, 1\}$, has a solution $x_1 =  x_2 = 1$ and $x_3=0$. Hence $\{e_1,e_2\}$ is a minimum global forcing set for $K_3$.

\end{document}